\newtheorem*{theorem*}{Theorem}
\newtheorem*{lemma*}{Lemma}
\newtheorem{theorem}[equation]{Theorem}
\newtheorem{proposition}[equation]{Proposition}
\newtheorem{definition}[equation]{Definition}
\newtheorem{corollary}[equation]{Corollary}
\theoremstyle{remark}
\newtheorem{remark}[equation]{Remark}
\DeclareMathOperator{\Hom}{Hom}
\DeclareMathOperator{\Ext}{Ext}
\DeclareMathOperator{\Ind}{Ind}
\DeclareMathOperator{\rank}{rank}
\newcommand{\bbF}{{\mathbb F}}
\newcommand{\C}{{\mathbb C}}
\newcommand{\R}{{\mathbb R}}
\newcommand{\Q}{{\mathbb Q}}
\newcommand{\Z}{{\mathbb Z}}
\newcommand{\N}{{\mathbb N}}
\newcommand{\bbP}{{\mathbb P}}
\newcommand{\CT}{{\mathcal T}}
\newcommand{\Sym}{{\mathrm{Sym}}}
\newcommand{\inv}{^{-1}}
\newcommand{\lexp}[2]{\kern\scriptspace\vphantom{#2}^{#1}\kern-\scriptspace#2}
\newcommand\be{\begin{equation}}
\newcommand\ee{\end{equation}}
\newcommand\tr{\mathrm{tr}}
\begin{document}
\author{Anthony~Henderson and Gus~Lehrer}
\address{
School of Mathematics and Statistics\\
University of Sydney\\
NSW 2006, Australia\\
Fax: +61 2 9351 4534}
\email{anthonyh@maths.usyd.edu.au, gusl@maths.usyd.edu.au}
\subjclass[2000]{Primary 14M25, 14P25; Secondary 20C30, 14L30}
\title [Real Coxeter toric varieties]
{The equivariant Euler characteristic of real Coxeter toric varieties}
%\date {4th June, 2008}
\begin{abstract}
Let $W$ be a Weyl group, and let $\CT_W$ be
the complex toric variety attached to the fan of cones
corresponding to the reflecting hyperplanes of $W$, 
and its weight lattice.
The real locus $\CT_W(\R)$ is a smooth, connected, compact manifold with
a $W$-action.
We give a formula for the equivariant Euler characteristic of
$\CT_W(\R)$ as a generalised character of $W$. In type $A_{n-1}$ for $n$ odd,
one obtains a generalised character of $\Sym_n$ whose degree
is (up to sign) the $n^{\text{th}}$ Euler number.
\end{abstract}
%\dedicatory

\maketitle
%%%%%%%%%%%%%%%%%%%%%%%%%%%%%%%%%%%%%%%%%%%%%%%%%%%%%%%%%%%%%%%%%%%%%%
\section{Introduction} 
Let $\Phi$ be a crystallographic root system 
in a Euclidean space $V$, and let $\Pi$ be a simple subsystem of $\Phi$.
We assume that $\Phi$ spans $V$,
and write $r=|\Pi|=\dim V$.
Let $W$ be the Weyl group of $\Phi$. As described in \cite{procesi},
and more generally in \cite{F}, there is a 
complex toric variety $\CT_W$ associated to this data, which is defined by the fan
of rational convex polyhedral cones into which $V$ is divided by
the reflecting hyperplanes of $W$, and the weight lattice 
$N=\{\omega\in V\mid\langle\omega,\alpha\rangle\in\Z,\,\forall\alpha\in \Phi\}$.
As shown in \cite[Section IV]{dps}, $\CT_W$ coincides with the Hessenberg variety
in which the chosen subset of negative roots is $-\Pi$.

The Weyl group $W$ clearly acts on $\CT_W$, and hence acts on its rational
cohomology $H^\bullet(\CT_W,\Q)$. This action has been studied in several
papers, for example \cite{procesi}, \cite{stembridge}, \cite{L}. 

In this paper we consider instead the real locus $\CT_W(\R)$, which is
the set of fixed points of the complex conjugation $\sigma$ on $\CT_W$.

\begin{proposition}\label{twr}
\begin{enumerate}
\item
$\CT_W(\R)$ is a smooth, connected, compact manifold 
%\textup{(}without boundary\textup{)}
of dimension $r$.
\item
$\CT_W(\R)$ has a real cell decomposition in which the cells $C_w$ are indexed
by $w\in W$, and $C_w\cong\R^{d(w)}$ where
$$
d(w)=|\{\alpha\in\Pi\,|\,w(\alpha)\in\Phi^-\}|.
$$
\end{enumerate}
\end{proposition}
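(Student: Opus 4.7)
The plan is to deduce both parts from the standard theory of smooth complete toric varieties, together with a Bia\l ynicki--Birula style cell decomposition applied to the real locus.

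For part~(i), I would first observe that the Coxeter fan is smooth, simplicial, and complete: its maximal cones are the Weyl chambers $wC$ for $w\in W$, each spanned by a $W$-translate of the fundamental weights, which form a $\Z$-basis of $N$. Hence $\CT_W$ is a smooth, complete complex toric variety of dimension~$r$, and $\CT_W(\R)$, being the fixed locus of the anti-holomorphic involution $\sigma$, is a smooth compact real manifold of dimension~$r$. Connectedness is deferred to the final step, once the cell decomposition of~(ii) is in hand.

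For part~(ii), I would choose $\lambda$ in the interior of the fundamental chamber $C$ and study the action on $\CT_W$ of the one-parameter subgroup $\exp(t\lambda)$ of the torus. Its fixed points are exactly the torus-fixed points $p_w$ of $\CT_W$, indexed by the maximal cones $wC$, hence by $w\in W$. In the smooth affine chart $U_w\cong\C^r$ at $p_w$, the torus acts with weights $w(\alpha_1),\dots,w(\alpha_r)$ on the coordinate axes, so $\exp(t\lambda)$ acts on the $i$-th axis with weight $\langle\lambda,w(\alpha_i)\rangle$. Since $\lambda\in C$, this weight is negative precisely when $w(\alpha_i)\in\Phi^-$, so the contracting real subspace at $p_w$ has dimension equal to $d(w)$. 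I would take $C_w\subset\CT_W(\R)$ to be the stable manifold at $p_w$ under this flow; locally it is identified with $\R^{d(w)}$.

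The main technical step is to show that the $C_w$ together form a genuine CW decomposition of $\CT_W(\R)$ with the prescribed cell dimensions. This requires: (a)~that every point of $\CT_W(\R)$ flows to some $p_w$ as $t\to+\infty$, which uses completeness of the fan; (b)~that each $C_w$ is globally (not just locally) homeomorphic to $\R^{d(w)}$, using smoothness of the fan and a real analogue of the Bia\l ynicki--Birula argument; and (c)~that the attaching maps assemble the cells into a CW structure. I expect step~(b) to be the principal obstacle, since the classical Bia\l ynicki--Birula theorem is stated over $\C$ and one must verify that the real cells have exactly the expected dimensions rather than some quotient or non-Euclidean model. Once the decomposition is established, connectedness follows at once from~(ii): the condition $d(w)=r$ forces $w(\alpha)\in\Phi^-$ for every $\alpha\in\Pi$, i.e.\ $w=w_0$, so $C_{w_0}$ is the unique top-dimensional cell; being open and dense in the manifold $\CT_W(\R)$, its closure is all of $\CT_W(\R)$, which is therefore connected.
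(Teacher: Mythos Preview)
Your approach is sound and would work, but it differs from the paper's in two notable ways.

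For part~(ii), the paper does not rederive the cell decomposition via a real Bia\l ynicki--Birula argument. Instead it invokes the complex cell decomposition of $\CT_W$ already established in \cite{dps} (where $\CT_W$ is identified with a Hessenberg variety), and then simply observes that each complex cell is defined over~$\R$, i.e.\ stable under~$\sigma$. Taking real points of a $\sigma$-stable cell $\cong\C^{d(w)}$ immediately gives a real cell $\cong\R^{d(w)}$, and the CW structure carries over. This sidesteps exactly the ``principal obstacle'' you flagged in step~(b): rather than proving a real Bia\l ynicki--Birula theorem from scratch, one borrows the complex result and descends. Your route is more self-contained but demands the extra verification you anticipated; the paper's route is shorter but leans on an external reference.

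For connectedness, the paper argues from the \emph{bottom} of the cell structure rather than the top: $d(w)=0$ forces $w=e$, so there is a unique $0$-cell, and any CW complex with a single $0$-cell is connected. Your argument via the unique top cell $C_{w_0}$ is also correct, since the $(r-1)$-skeleton is closed with empty interior in an $r$-manifold and hence $C_{w_0}$ is open and dense. Both arguments are equally valid; the $0$-cell version is perhaps marginally cleaner because it does not use the manifold structure.

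A minor caution on your weight computation: depending on how one sets up the duality between the root lattice and the weight lattice, the characters on the coordinate axes of $U_w$ may be $w(\alpha_i^\vee)$ rather than $w(\alpha_i)$. This does not affect the sign analysis---$w(\alpha_i)\in\Phi^-$ iff $w(\alpha_i^\vee)\in(\Phi^\vee)^-$---so your dimension count $d(w)$ is correct either way, but it is worth stating the convention explicitly.
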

\begin{proof}
In part (\textrm{i}), 
everything but the connectedness follows from the fact that $\CT_W$ is a nonsingular
projective variety of dimension $r$
(see, for example, \cite[Chapter II, \S2.3]{shafarevich} -- it is 
trivial that $\CT_W(\R)$ is nonempty).
Connectedness follows from part (\textrm{ii}), since there is a unique 
$0$-dimensional cell (more generally, the real locus of any nonsingular 
projective toric variety
is connected, because one can use a filtration defined
as in \cite[\S5.2]{F} instead of the cell decomposition).
%$$
%\CT_W=Z_1\supset Z_2\supset\cdots\supset Z_m\supset Z_{m+1}=\emptyset,
%$$
The analogue of part (\textrm{ii}) for the complex variety may be found in \cite{dps},
and it is clear that the complex cells are defined over $\R$, i.e.\ are fixed by $\sigma$.
\end{proof}

\begin{corollary} \label{euler}
The Euler characteristic of $\CT_W(\R)$ is given by
$$
\chi(\CT_W(\R))=\sum_{w\in W}(-1)^{d(w)},
$$
and is zero if $r$ is odd.
\end{corollary}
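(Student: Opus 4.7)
The plan is to apply Proposition \ref{twr}(ii) directly. Since $\CT_W(\R)$ is a compact manifold realised as a finite CW complex with exactly one cell of dimension $d(w)$ for each $w\in W$, its Euler characteristic equals the alternating sum of cell counts, which is $\sum_{w\in W}(-1)^{d(w)}$. This yields the first assertion with no further work.

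For the vanishing when $r$ is odd, I would pair each $w\in W$ with $w_0w$, where $w_0$ denotes the longest element of $W$. The key identity is $d(w_0 w)=r-d(w)$. To prove it, I would use the standard fact that $w_0(\Phi^+)=\Phi^-$: for any simple root $\alpha\in\Pi$ one has $(w_0w)(\alpha)\in\Phi^-$ precisely when $w(\alpha)\in\Phi^+$, so summing over $\alpha\in\Pi$ gives
$$d(w_0w)=|\Pi|-d(w)=r-d(w),$$
and hence $(-1)^{d(w_0w)}=(-1)^r(-1)^{d(w)}$.

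Provided $r\geq1$, we have $w_0\neq1$ and $w_0^2=1$, so left multiplication by $w_0$ is a fixed-point-free involution on $W$; pairing the contributions from $w$ and $w_0w$ therefore forces the sum to vanish whenever $r$ is odd. (The case $r=0$ is trivial, but the statement is then vacuous anyway.) I do not anticipate any serious obstacle: both the Euler characteristic formula and the cancellation argument are essentially immediate, the only nontrivial input being the elementary identity $w_0(\Phi^+)=\Phi^-$ for the longest element.
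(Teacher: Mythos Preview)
Your proof is correct. The first part---reading off the Euler characteristic from the cell decomposition of Proposition~\ref{twr}(ii)---is exactly what the paper does.

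For the vanishing when $r$ is odd, you take a different route. The paper simply invokes the general fact that any odd-dimensional compact manifold has zero Euler characteristic (via Poincar\'e duality). You instead give a direct combinatorial cancellation: the involution $w\mapsto w_0w$ satisfies $d(w_0w)=r-d(w)$, so the terms pair off with opposite signs. Your argument is more elementary in that it avoids any appeal to manifold topology beyond the cell count, and it makes the cancellation explicit at the level of the Weyl group; the paper's argument is shorter and more conceptual, but relies on a deeper topological input. Both are perfectly valid, and your identity $d(w_0w)=r-d(w)$ is a nice observation in its own right.
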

\begin{proof}
The formula for Euler characteristic follows immediately from part (\textrm{ii}) of
Proposition \ref{twr}. Any odd-dimensional compact manifold has zero Euler characteristic.
\end{proof}

Our aim is to give a description of the Euler characteristic which is equivariant,
i.e.\ incorporates the $W$-action on $\CT_W(\R)$. The cohomology spaces
$H^i(\CT_W(\R),\Q)$ are $\Q W$-modules, and so may be considered as elements 
of the Grothendieck ring $R(W)=R(\Q W)$. 
\begin{definition} \label{deflambda}
The equivariant Euler characteristic of $\CT_W(\R)$ is the following
element of $R(W)$:
$$
%\Lambda=
\Lambda_W=\sum_i (-1)^i H^i(\CT_W(\R),\Q).
$$
\end{definition}

%To state our result about $\Lambda_W$, we need a little more notation.
\begin{definition} \label{defpi}
Let $\pi_W^{(2)}$ be the permutation character of $W$ on the finite set $N/2N$,
where $N$ is the weight lattice as above.
For any subset $J\subseteq\Pi$, let $\pi_{W_J}^{(2)}$ be the corresponding
character of the parabolic subgroup $W_J$, regarded as the Weyl group of the parabolic
subsystem $\Phi_J$ in the subspace $V_J=\R J$.
\end{definition}

%Write $\langle\;,\;\rangle$ for a $W$-invariant Euclidean form on $V$.
%If $L=\Z\Phi$ is the root lattice, then 
%$M=\{\omega\in V\mid\langle\omega,\alpha\rangle\in\Z\text{ for all }\alpha\in L\}$
%is the weight lattice corresponding to $\Phi$; let $G$ be a (split) semisimple
%complex Lie group corresponding to the root datum $(L,M,\langle\;,\;\rangle)$.
%If $T$ is a maximal torus of $G$, then the character group $X(T)\cong L$
%and the cocharacter group $Y(T)\cong M$.
%\begin{definition} \label{defpsi}
%Define the generalised character $\psi_W\in R(W)$ by
%$$
%\psi_W(w)=(-1)^r\varepsilon(w)\pi_W^{(2)}(w),
%$$
%where $\varepsilon$ is the sign character of $W$, and
%$\pi_W^{(2)}$ is the permutation character of $W$ on the finite set $M/2M$.
%\end{definition}
%For each subset $J\subseteq\Pi$ we have a parabolic subgroup
%$W_J$ with corresponding root system $\Phi_J$ in the
%subspace $V_J$ of $V$ spanned by $J$. Thus Definition \ref{defpsi} applies to
%each $W_J$ as well as to $W$.

In Section 2, we will prove the following formula.
\begin{theorem}\label{main}
The equivariant Euler characteristic $\Lambda_W$ of $\CT_W(\R)$
is given as an element of the Grothendieck ring $R(W)$ by
$$
\Lambda_W=\varepsilon_W\sum_{J\subseteq \Pi}(-1)^{|J|}\,\Ind_{W_J}^W(\pi_{W_J}^{(2)}),
$$
where $\varepsilon_W$ denotes the sign character of $W$.
\end{theorem}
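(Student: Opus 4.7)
The plan is to stratify $\CT_W(\R)$ by real torus orbits and apply additivity of the $W$-equivariant compactly supported Euler characteristic $\chi^W_c$ in the Grothendieck ring $R(W)$. By the orbit-cone correspondence, the cones of the Coxeter fan form a single $W$-set $\bigsqcup_{J \subseteq \Pi} W/W_J$, where $W_J$ is the pointwise stabilizer of the face $C_J = \{v \in V \mid \langle v,\alpha\rangle = 0 \text{ for } \alpha \in J,\ \langle v,\alpha\rangle \geq 0 \text{ for } \alpha \in \Pi\setminus J\}$ of the fundamental chamber. Since $\CT_W(\R)$ is compact and stratifies as $\bigsqcup_\sigma O(\sigma)(\R)$, additivity along this stratification collapses the $W$-orbits of strata into induced representations, yielding
\begin{equation*}
\Lambda_W = \sum_{J \subseteq \Pi} \Ind_{W_J}^W\, \chi^{W_J}_c(O(C_J)(\R)).
\end{equation*}

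Next I would compute each stratum contribution. Writing $M = N^*$ for the dual lattice, one has $O(C_J)(\R) = \Hom(V_J\cap M, \R^*)$, and the canonical splitting $\R^* \cong \{\pm 1\}\times \R_{>0}$ gives a $W_J$-equivariant homeomorphism $O(C_J)(\R) \cong G_J \times V_J$. Here the component group $G_J := \Hom(V_J\cap M, \{\pm 1\})$ carries the natural $W_J$-action, while the identity component is identified with $V_J$ (via the logarithm) with its reflection representation. By the K\"unneth formula, $H^*_c$ is concentrated in degree $|J|$, where an invertible linear map acts on $H^{|J|}_c(V_J) \cong \Q$ by the sign of its determinant, giving the sign character $\varepsilon_{W_J}$. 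Hence
\begin{equation*}
\chi^{W_J}_c(O(C_J)(\R)) = (-1)^{|J|}\, \varepsilon_{W_J} \otimes \Q[G_J].
\end{equation*}

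Two formal manipulations then finish the argument. Since $W_J$ acts trivially on $V_J^\perp$, the restriction $\varepsilon_W|_{W_J}$ coincides with $\varepsilon_{W_J}$, so the projection formula gives $\Ind_{W_J}^W(\varepsilon_{W_J}\otimes\Q[G_J]) = \varepsilon_W \otimes \Ind_{W_J}^W \Q[G_J]$, pulling $\varepsilon_W$ outside the sum. To finish, I would identify $G_J$ with $N_J/2N_J$ as $W_J$-sets: in the paper's setup $N = (\Z\Phi)^\vee$, so $M = (\Z\Phi)^{\vee\vee} = \Z\Phi$, whence $V_J\cap M = \Z\Phi_J$ (using that $\Pi$ is a basis of $V$), and the dual lattice of $\Z\Phi_J$ inside $V_J$ is precisely $N_J$; the pairing $\langle\,,\,\rangle$ modulo $2$ then supplies the canonical $W_J$-equivariant isomorphism $N_J/2N_J \cong \Hom(V_J\cap M, \{\pm 1\})$. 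The main obstacle is the second step, namely the equivariant analysis of the stratum $O(C_J)(\R)$: both the orientation sign $\varepsilon_{W_J}$ on $H^{|J|}_c$ and the lattice-theoretic identification of $G_J$ require careful bookkeeping, but once these are in hand the theorem follows directly.
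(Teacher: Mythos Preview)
Your proposal is correct and follows essentially the same route as the paper: stratify $\CT_W(\R)$ by torus orbits indexed by cosets of parabolic subgroups (the paper's Proposition~\ref{sumind}), then compute the equivariant compactly supported Euler characteristic of each real torus via the $W_J$-equivariant splitting $T_J(\R)\cong V_J\times N_J/2N_J$ (the paper's Proposition~\ref{phipsi}). The only cosmetic difference is that the paper phrases the argument in terms of character values via the Lefschetz fixed-point formula, whereas you work directly with virtual representations in $R(W)$; the lattice identifications and the sign bookkeeping you flag as the delicate points are exactly what the paper carries out.
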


In Section 3, we will deduce the following more explicit formula in type A.
\begin{theorem}\label{typea}
Suppose that $\Phi$ is of type A$_{n-1}$, so that $W\cong\Sym_n$. Then
\[
\Lambda_{\Sym_n}=
\sum_{m\geq 0}(-1)^m
\negthickspace\negthickspace
\sum_{\substack{n_1,n_2,\cdots,n_m\geq 2\\
n_1,n_2,\cdots,n_m\textup{ even}\\n_1+n_2+\cdots+n_m\leq n}}
\negthickspace\negthickspace
\Ind_{\Sym_{n-n_1-\cdots-n_m}\times \Sym_{n_1}\times
\cdots \times \Sym_{n_m}}^{\Sym_n}(\varepsilon_{n_1,\cdots,n_m}),
\]
where $\varepsilon_{n_1,\cdots,n_m}$ is the linear character
whose restriction to the $\Sym_{n-n_1-\cdots-n_m}$ factor is trivial and
whose restriction to the $\Sym_{n_i}$ factors is the sign character.
\end{theorem}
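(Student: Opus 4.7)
The plan is to transport both sides of Theorem \ref{typea} to the ring of symmetric functions $\Lambda$ via the Frobenius characteristic map $\mathrm{ch}$ and match the two sides as generating functions in a formal variable $x$.

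Applying Theorem \ref{main} with $W=\Sym_n$, subsets $J\subseteq\Pi$ correspond to compositions $\beta=(b_1,\ldots,b_l)\vDash n$ with $|J|=n-l$, $W_J=\Sym_{b_1}\times\cdots\times\Sym_{b_l}$, and $\pi_{W_J}^{(2)}=\bigotimes_i\pi_{\Sym_{b_i}}^{(2)}$. To analyse the factors, I would identify the type-$A_{k-1}$ weight lattice $N$ with $\Z^k/\Z(1,\ldots,1)$ as a $\Sym_k$-module (via $v\mapsto v-\tfrac1k(\sum v_j)(1,\ldots,1)$), yielding $N/2N\cong\bbF_2^k/\bbF_2(1,\ldots,1)$: an unordered pair $\{S,S^c\}$ of complementary subsets of $\{1,\ldots,k\}$. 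The $\Sym_k$-orbits are parameterized by $|S|\in\{0,\ldots,\lfloor k/2\rfloor\}$ with stabilizer $\Sym_{|S|}\times\Sym_{k-|S|}$, except that when $k$ is even and $|S|=k/2$ the self-complementary orbit has stabilizer $\Sym_{k/2}\wr\Sym_2$. Using the standard Frobenius identities $\mathrm{ch}(\Ind_{\Sym_j\times\Sym_{k-j}}^{\Sym_k}(1))=h_jh_{k-j}$ and $\mathrm{ch}(\Ind_{\Sym_{k/2}\wr\Sym_2}^{\Sym_k}(1))=h_2[h_{k/2}]=\tfrac12(h_{k/2}^2+p_2[h_{k/2}])$ gives, uniformly in $k$,
\[
u_k:=\mathrm{ch}\bigl(\pi_{\Sym_k}^{(2)}\bigr)=\tfrac12[x^k]H(x)^2+\tfrac12[x^k]\,p_2[H(x^2)],
\]
where the second term vanishes for $k$ odd.

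The key identity, provable by writing both sides as $\exp\sum_{m\geq 1}p_{2m}x^{2m}/m$ via the classical power-sum expansion $H(x)=\exp\sum_{k\geq 1}p_kx^k/k$, is $p_2[H(x^2)]=H(x)H(-x)$. This yields the compact form
\[
U(x):=\sum_{k\geq 0}u_kx^k=\tfrac12\,H(x)\bigl(H(x)+H(-x)\bigr).
\]
Applying $\mathrm{ch}$ to the right-hand side of Theorem \ref{main}, summing over compositions as a geometric series, and translating $\otimes\varepsilon_{\Sym_n}$ into the standard involution $\omega$ on $\Lambda$ (satisfying $\omega(h_k)=e_k$), one obtains
\[
\sum_{n\geq 0}\mathrm{ch}(\Lambda_{\Sym_n})\,x^n=\omega\!\left(\frac{1}{U(-x)}\right)=\frac{2}{E(-x)\bigl(E(x)+E(-x)\bigr)}.
\]

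The proposed formula in Theorem \ref{typea} has, for each $(n_1,\ldots,n_m)$, Frobenius characteristic $h_{n-\sum n_i}\prod_i e_{n_i}$; summing yields the generating function
\[
\frac{H(x)}{\sum_{k\geq 0}e_{2k}x^{2k}}=\frac{2H(x)}{E(x)+E(-x)}.
\]
The two expressions match by the classical identity $E(-x)H(x)=1$. The technical heart of the argument is the identification of $N/2N$ and its orbit structure, combined with the plethystic identity $p_2[H(x^2)]=H(x)H(-x)$, which absorbs the wreath-product contribution $h_2[h_{k/2}]$ into the uniform closed form for $U(x)$.
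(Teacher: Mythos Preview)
Your argument is correct and, at the generating-function level, identical to the paper's: both rewrite Theorem~\ref{main} as an identity in the completed representation ring (equivalently, in $\Lambda$ via $\mathrm{ch}$), substitute a closed form for $\sum_k \mathrm{ch}(\pi_{\Sym_k}^{(2)})x^k$, invert, and untwist by the sign involution. The paper's equations \eqref{firsteqn}--\eqref{fourtheqn} are exactly your manipulations with $H,E$ once one uses the dictionary $\sum_n 1_{\Sym_n}\leftrightarrow H(x)$, $\sum_n(-1)^n\varepsilon_{\Sym_n}\leftrightarrow E(-x)$.

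The one substantive difference is in establishing the closed form $U(x)=\tfrac12 H(x)\bigl(H(x)+H(-x)\bigr)$, i.e.\ Proposition~\ref{piformula}. The paper notes that for $n$ even this is \emph{not} an isomorphism of $\Sym_n$-sets and therefore proves it by a direct character count, obtaining $\pi_{\Sym_n}^{(2)}(w)=2^{c(w)-1}$ or $2^{c(w)}$ according as $w$ has an odd cycle or not. You instead carry out the orbit decomposition honestly, pick up the wreath-product stabiliser $\Sym_{k/2}\wr\Sym_2$ at the self-complementary orbit, and then absorb its plethystic contribution $h_2[h_{k/2}]$ via the identity $\sum_m p_2[h_m]x^{2m}=H(x)H(-x)$. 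Both routes are short; yours explains structurally why the anomalous stabiliser does not spoil the simple formula, while the paper's avoids plethysm altogether.
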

\noindent
An intriguing question suggested by the form of
Theorem \ref{typea} is whether the inner sum
equals the individual cohomology space $H^m(\CT_{\Sym_n}(\R),\Q)$
as $\Sym_n$-module.

\begin{remark}\label{cell-orbit}
Theorem \ref{main} implies the following formula for the
nonequivariant Euler characteristic:
\be
\chi(\CT_W(\R))=\sum_{J\subseteq \Pi}[W:W_J]\,(-2)^{|J|}.
\ee
The fact that this equals the formula given in Corollary \ref{euler}
is the $q=-1$ case of the following identity, alluded to in
\cite[Remark 3.6]{L}:
\be \label{poincare}
\sum_{J\subseteq \Pi}[W:W_J]\,(q-1)^{|J|}=\sum_{w\in W}q^{d(w)}.
\ee
The two sides of \eqref{poincare} are the expressions for the Poincar\'e
polynomial of $\CT_W$ resulting from the viewpoints of \cite{L} and \cite{dps}
respectively. There is an elementary proof of \eqref{poincare}:
rewrite $[W:W_J]$ as $|\{w\in W\,|\,w(J)\subseteq\Phi^-\}|$, expand
$(q-1)^{|J|}$ using the Binomial Theorem, and apply the Inclusion-Exclusion Principle.
\end{remark}
In Section 4, we will compare
the equivariant Euler characteristic of $\CT_W(\R)$
with the $q=-1$ specialization of the equivariant Poincar\'e polynomial
of $\CT_W$, and thus place our results in a more general context.
%%%%%%%%%%%%%%%%%%%%%%%%%%%%%%%%%%%%%%%%%%%%%%%%%%%%%%%%%%%%%%%%%%%%%%%%
\section{Proof of Theorem \ref{main}}

Since the cell decomposition of $\CT_W$ defined in \cite{dps} 
is not preserved under the action of $W$, it is not convenient
for the computation of the equivariant Euler characteristic. Instead we
use the decomposition into torus orbits, as in \cite{L}.
Although we will not need it, we remark that it is possible to 
give a $W$-stable CW-complex construction of $\CT_W(\R)$, using
\cite[Theorem 2.4]{delaunay}.

Let $G$ be the semisimple complex algebraic group corresponding to
the root datum $(\Z\Phi,\Phi,N,\Phi^\vee)$ (thus, $G$ is of adjoint type),
and let $T$ be a maximal torus of $G$ with cocharacter group $Y(T)\cong N$.
By construction, $\CT_W$ carries an action of $T$.

Since $T$ is defined over $\Z$, it has a canonical real structure.
The Weyl group elements give
automorphisms of $T$ which preserve the real locus $T(\R)$. 
\begin{definition} \label{defphi}
Define the generalised character $\phi_W\in R(W)$ by
$$
\phi_W(w)=\chi_c(T(\R)^w),
$$
where $\chi_c$ denotes the compact-supports Euler characteristic.
For any $J\subseteq\Pi$, let $\phi_{W_J}$ be the corresponding
element of $R(W_J)$.
\end{definition}
%Note that $T(\R)^w=T^w(\R)$.

\begin{proposition}\label{sumind}
In $R(W)$, the following equation holds:
$$
\Lambda_W=\sum_{J\subseteq \Pi}\Ind_{W_J}^W\phi_{W_J}.
$$
\end{proposition}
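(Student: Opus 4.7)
The strategy is to exploit the $T(\R)$-orbit stratification of $\CT_W(\R)$ together with additivity of the equivariant compact-supports Euler characteristic. Since $\CT_W(\R)$ is compact, $\Lambda_W$ equals $\sum_i(-1)^i[H^i_c(\CT_W(\R),\Q)]$ in $R(W)$, and for any $W$-stable locally closed stratification $X=\bigsqcup_i X_i$ one has $[H^\bullet_c(X)]=\sum_i [H^\bullet_c(X_i)]$ in the appropriate Grothendieck group.

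The fan defining $\CT_W$ consists of all $W$-translates of the faces of the fundamental chamber. For each $J\subseteq\Pi$, let $C_J$ be the face of the closed fundamental chamber cut out by $\langle v,\alpha\rangle=0$ for $\alpha\in J$; it has codimension $|J|$ in $V$, and its pointwise stabilizer in $W$ is the parabolic subgroup $W_J$. Every cone in the fan is uniquely of the form $wC_J$ for some coset $wW_J\in W/W_J$, with associated $T$-orbit $O_{wC_J}=w\cdot O_{C_J}$. Since the union of these orbits over a fixed $W$-orbit of cones is $W$-equivariantly isomorphic to the induced $W$-space $W\times_{W_J}O_{C_J}(\R)$, additivity yields
\[
\Lambda_W=\sum_{J\subseteq\Pi}\Ind_{W_J}^W[H^\bullet_c(O_{C_J}(\R))].
\]

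It remains to identify $[H^\bullet_c(O_{C_J}(\R))]$ with $\phi_{W_J}$ in $R(W_J)$. The orbit $O_{C_J}$ is a complex torus of dimension $|J|$, whose cocharacter lattice is the image of $N$ under the orthogonal projection $V\to V_J$. A short computation using fundamental weights shows that $\omega_\beta$ projects to the fundamental weight of $\Phi_J$ corresponding to $\beta$ when $\beta\in J$, and to $0$ when $\beta\in\Pi\setminus J$ (since then $\omega_\beta\in V_J^\perp$); hence this image is precisely the weight lattice of $\Phi_J$. Thus $O_{C_J}$ is $W_J$-equivariantly isomorphic to the maximal torus of the adjoint group of $\Phi_J$, and the Lefschetz fixed-point formula gives $\Tr(w;H^\bullet_c(O_{C_J}(\R)))=\chi_c(O_{C_J}(\R)^w)=\phi_{W_J}(w)$ for all $w\in W_J$. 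The main obstacle is precisely this cocharacter-lattice identification, since a priori the projection of a weight lattice onto a subspace spanned by a subsystem need not coincide with the weight lattice of that subsystem on the nose.
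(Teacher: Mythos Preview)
Your proof is correct and follows essentially the same route as the paper: both use the $T$-orbit decomposition of $\CT_W(\R)$, additivity of $\chi_c$, and the identification of the cocharacter lattice of the orbit attached to $W_J$ with the weight lattice $M_J$ of $\Phi_J$. The paper packages this via the Lefschetz fixed-point formula and Frobenius' induced-character formula (asserting $Y(T(W_J))=M_J$ by citing \cite{L}), whereas you phrase it as equivariant additivity in $R(W)$ and supply the lattice identification explicitly; these are the same argument in slightly different dress.
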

\begin{proof}
By the Lefschetz fixed-point formula, the trace of an element $w\in W$
on $\Lambda_W$ is given by $\chi(\CT_W(\R)^w)$.
Since $\CT_W(\R)$ is compact, we may replace $\chi$ with the compact-supports
Euler characteristic $\chi_c$, which is an ``Eulerian function''
in the sense of \cite{dl} (i.e.\ additive
over decompositions of spaces into locally closed subspaces).

As noted in \cite[proof of Theorem 1.1]{L}, $\CT_W$ is the 
disjoint union of the orbits of $T$ on $\CT$; these orbits
are tori, so that $\CT_W=\coprod_{xW_J}T(xW_J)$, where the union 
is over all cosets $xW_J$ of all parabolic subgroups $W_J$
($J\subseteq \Pi$). 
We have $\dim (T(xW_J))=|J|$, and 
$Y(T(xW_J))=xM_J$, where $M_J$ is the weight lattice of
$W_J$ as in \S 1. 
This decomposition is stable 
under complex conjugation $\sigma$, and the pieces are
permuted by $W$: $w\cdot T(xW_J)=T(wxW_J)$, 
whence it follows that 
$$
\CT_W(\R)^w=\coprod_{\substack{xW_J\\wxW_J=xW_J}}T(xW_J)(\R)^w,
$$
and by the additivity of the compact-supports Euler characteristic,
$$
\chi_c(\CT_W(\R)^w)=\sum_{\substack{xW_J\\wxW_J=xW_J}}\chi_c(T(xW_J)(\R)^w).
$$
But if $wxW_J=xW_J$, then $T(xW_J)^w=x\cdot T(W_J)^{x\inv wx}$. So we obtain
$$
\chi_c(\CT_W(\R)^w)=\sum_{\substack{xW_J\\wxW_J=xW_J}}\chi_c(T(W_J)(\R)^{x\inv wx})
=\sum_{\substack{xW_J\\wxW_J=xW_J}}\phi_{W_J}(x\inv wx).
$$
The Proposition now follows from Frobenius' formula for induced characters.
\end{proof}

In view of Proposition \ref{sumind},
Theorem \ref{main} follows from the following result, applied to every parabolic
subgroup $W_J$ of $W$.

\begin{proposition}\label{phipsi}
We have $\phi_W=(-1)^r\,\epsilon_W\,\pi_W^{(2)}$.
\end{proposition}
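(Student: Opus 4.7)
My plan is to compute $\phi_W(w) = \chi_c(T(\R)^w)$ explicitly by exploiting the very simple structure of the real points of a split torus, and then match the resulting expression against the right-hand side via a parity argument on eigenvalues of $w$.

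First, since $T$ is a split torus over $\Z$ with cocharacter lattice $Y(T)\cong N$, there is a canonical $W$-equivariant isomorphism $T(\R) \cong N\otimes_\Z \R^\times$. The polar decomposition $\R^\times \cong \R_{>0}\times\{\pm 1\}$ (via $z\mapsto(\log|z|,\mathrm{sign}(z))$) induces a $W$-equivariant direct product decomposition
\[
T(\R) \;\cong\; (N\otimes_\Z \R)\;\times\;(N\otimes_\Z\Z/2\Z)\;=\;V\times N/2N,
\]
with $W$ acting on each factor through its natural action on $N$. Taking $w$-fixed points gives $T(\R)^w \cong V^w \times (N/2N)^w$, and the multiplicativity of the compact-supports Euler characteristic together with $\chi_c(\R^d)=(-1)^d$ yields
\[
\phi_W(w)\;=\;\chi_c(V^w)\cdot|(N/2N)^w|\;=\;(-1)^{\dim V^w}\,\pi_W^{(2)}(w).
\]

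It remains to check that $(-1)^{\dim V^w}=(-1)^r\,\varepsilon_W(w)$ for every $w\in W$. For this I would use the fact that $w$ acts orthogonally on the Euclidean space $V$, so its eigenvalues on $V\otimes_\R\C$ have modulus one and the non-real ones come in complex-conjugate pairs. Writing $n_{+1}(w)=\dim V^w$ and $n_{-1}(w)$ for the multiplicity of the eigenvalue $-1$, the number of non-real eigenvalues $r-n_{+1}(w)-n_{-1}(w)$ is even, so
\[
\dim V^w + n_{-1}(w) \equiv r \pmod 2.
\]
On the other hand, the complex-conjugate eigenvalue pairs contribute $1$ to $\det(w|V)$, so $\varepsilon_W(w)=\det(w|V)=(-1)^{n_{-1}(w)}$. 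Combining, $(-1)^{\dim V^w}=(-1)^{r+n_{-1}(w)}=(-1)^r\,\varepsilon_W(w)$, which finishes the proof.

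The only real content is the parity argument in the last step; the torus decomposition and the Euler characteristic computation are routine. I expect no serious obstacle, but one small point to be careful about is that the isomorphism $T(\R)\cong V\times N/2N$ is an isomorphism of topological groups with $W$-action (not merely of sets), which is what justifies taking $w$-fixed points factor by factor.
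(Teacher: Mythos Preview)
Your proof is correct and follows essentially the same route as the paper: decompose $T(\R)\cong V\times N/2N$ via $Y(T)\otimes_\Z\R^\times$ and the polar decomposition of $\R^\times$, then take fixed points and compute $\chi_c$. The only difference is cosmetic---the paper simply invokes the identity $\varepsilon_W(w)=(-1)^{r-\dim V^w}$ as known, whereas you supply the eigenvalue-parity argument for it.
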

\begin{proof}

We have an isomorphism of topological groups
$T(\R)\cong Y(T)\otimes_\Z \R^\times$,
where $\R^\times$ is viewed as a $\Z$-module via its abelian group structure, and
the topology on $Y(T)\otimes_\Z \R^\times$ is such that $\{0\}\otimes \R^\times$
is an open and closed subgroup homeomorphic to $\R^\times$.
For any $w\in W$, the action of $w$ on $T(\R)$ corresponds to the action of
$w\otimes\mathrm{id}$ on $Y(T)\otimes_\Z \R^\times$.
Moreover, we have another isomorphism of topological groups
$\R\oplus\Z/2\Z\overset{\sim}{\to}
\R^\times$, defined by $(x,a)\mapsto(-1)^a e^x$. Hence
$$
T(\R)\cong Y(T)\otimes_\Z(\R\oplus\Z/2\Z)\cong (Y(T)\otimes_\Z \R)\oplus
(Y(T)\otimes_\Z \Z/2\Z)\cong V\oplus N/2N,
$$
where $V$ has its usual topology, and $N/2N$ is a finite discrete set.
This isomorphism respects the action of $W$ on both sides, 
$W$ acting diagonally on the right side.

It follows that for any $w\in W$, we have a homeomorphism
$T(\R)^w\cong V^w \times (N/2N)^w$,
and hence
$$
\chi_c(T(\R)^w)=\chi_c(V^w)\, |(N/2N)^w|=(-1)^{\dim V^w}\pi_W^{(2)}(w),
$$
because $V^w$ is a real vector space.
The result now follows from the fact that $\varepsilon_W(w)=(-1)^{r-\dim V^w}$.
\end{proof}

%\begin{proof}[Proof of Theorem \ref{main}]
%Theorem \ref{main} now follows immediately from Propositions
%\ref{sumind} and \ref{phipsi}.
%\end{proof}
%%%%%%%%%%%%%%%%%%%%%%%%%%%%%%%%%%%%%%%%%%%%%%%%%%%%%%%%%%%%%%%%%%%%%%%%%%
\section{Type A}

In this section we restrict attention to the case where $\Phi$ is of type
A$_{n-1}$, so that $W\cong\Sym_n$, the symmetric group of degree $n\geq 1$.
In this case, the quantity $d(w)$ defined in Proposition \ref{twr} is the
number of descents of the permutation $w$, and it follows from Corollary
\ref{euler} that
\be \label{tan}
\chi(\CT_{\Sym_n}(\R))=\begin{cases}
0,&\text{ if $n$ is even,}\\
(-1)^{\frac{n-1}{2}}E_n,&\text{ if $n$ is odd,}
\end{cases}
\ee
where $E_n$ is the Euler number, i.e.\ the coefficient of $\frac{x^n}{n!}$
in the Taylor series of $\tan(x)$ (see \cite[Section 3.16]{stanley}).

%Our goal is to give formulas for $\psi_{\Sym_n}$ and $\Lambda_{\Sym_n}$.
%It is easy to verify the following values
%for the low-degree symmetric groups.
%
%\smallskip
%
%\begin{center}
%\begin{tabular}{|l|c|c|}
%\hline
%{$n$}& {$\psi_{\Sym_n}$}&$\Lambda_{\Sym_n}$\\
%\hline
%$1$ & $1$ & $1$\\
%\hline 
%$2$ & $-2\varepsilon$ & $1-\varepsilon$\\
%\hline
%$3$ & $\rho+2\varepsilon$ & $1-\rho-\varepsilon$\\
%\hline
%$4$ & $-3\varepsilon-\varepsilon\rho-\sigma$ & $1-\rho+\sigma$\\ 
%\hline
%\end{tabular}
%\end{center}
%Here $\varepsilon$ is the sign character, $\rho$ is the
%reflection character, and $\sigma$ denotes the 
%two-dimensional irreducible character of $\Sym_4$.

We have a simple expression for the permutation character
$\pi_{\Sym_n}^{(2)}$.

\begin{proposition} \label{piformula}
For any $n\geq 1$, the following equation holds in $R(\Sym_n)$:
$$
\pi_{\Sym_n}^{(2)}=\sum_{\substack{0\leq s\leq n\\s\textup{ even}}}
\Ind_{\Sym_{n-s}\times\Sym_s}^{\Sym_n}(1).
$$
\end{proposition}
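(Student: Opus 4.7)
The plan is to establish Proposition~\ref{piformula} by evaluating both sides at an arbitrary element $w \in \Sym_n$ and checking equality as class functions. For $w$ of cycle type $(c_1,\ldots,c_k)$, I predict that both sides equal $2^k$ when every $c_j$ is even and $2^{k-1}$ otherwise.

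First I would make $N/2N$ explicit. For type $A_{n-1}$, the weight lattice is naturally isomorphic as a $\Sym_n$-module to $\Z^n / \Z\cdot(1,\ldots,1)$, so reduction modulo $2$ fits into a short exact sequence
\[
0 \to \langle (1,\ldots,1)\rangle \to (\Z/2\Z)^n \to N/2N \to 0
\]
of $\Sym_n$-modules, identifying $N/2N$ with the $2^{n-1}$ cosets $[v] = \{v,\, v+(1,\ldots,1)\}$, with $\Sym_n$ acting by permuting coordinates of $v$. A class $[v]$ is $w$-fixed iff $wv \in \{v,\, v+(1,\ldots,1)\}$, and these two possibilities are mutually exclusive. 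The equation $wv = v$ in $(\Z/2\Z)^n$ forces $v$ to be constant on each cycle of $w$, giving $2^k$ solutions; while $wv = v+(1,\ldots,1)$ forces $v$ to alternate along each cycle, which admits $2^k$ solutions when every $c_j$ is even and none otherwise. Since each $w$-fixed class is represented by exactly two vectors $v$ and $v+(1,\ldots,1)$, dividing by $2$ yields $\pi_{\Sym_n}^{(2)}(w) = 2^k$ in the all-even case and $2^{k-1}$ otherwise.

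Next I would evaluate the right-hand side at $w$. Since $\Ind_{\Sym_{n-s}\times\Sym_s}^{\Sym_n}(1)$ is the permutation character on $s$-element subsets of $\{1,\ldots,n\}$, the sum over even $s$ evaluated at $w$ counts the $w$-invariant subsets of even cardinality. Such subsets $S$ correspond bijectively to subsets $T \subseteq \{1,\ldots,k\}$ indexing the cycles to be included, with $|S| = \sum_{j\in T} c_j$. The total number of $w$-invariant subsets is $2^k$, while the signed count $\sum_T (-1)^{|S|}$ equals $\prod_{j=1}^k (1+(-1)^{c_j})$, which is $2^k$ if every $c_j$ is even and $0$ otherwise. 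Adding and halving, the number of even-cardinality $w$-invariant subsets is $2^k$ or $2^{k-1}$ accordingly, matching the value computed for $\pi_{\Sym_n}^{(2)}(w)$.

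The main obstacle is just careful bookkeeping: one must verify that the two defining conditions $wv=v$ and $wv=v+(1,\ldots,1)$ cannot both hold for the same $v$, and that each $w$-fixed class in $N/2N$ is represented by exactly two vectors in $(\Z/2\Z)^n$. Once this is straight, the proof reduces to the combinatorial observation that both sides are determined by the single parity invariant ``every cycle of $w$ is even.''
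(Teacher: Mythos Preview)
Your proposal is correct and follows essentially the same route as the paper: both identify $N/2N$ with $(\Z/2\Z)^n/\langle(1,\ldots,1)\rangle$, lift $w$-fixed classes to vectors satisfying $wv=v$ or $wv=v+(1,\ldots,1)$, derive the dichotomy $2^{k}$ versus $2^{k-1}$ according to whether every cycle of $w$ is even, and then interpret the right-hand side as the number of $w$-stable subsets of even cardinality. The only notable difference is that the paper leaves the final count of even-size $w$-stable subsets as ``clearly'' equal to the same dichotomy, whereas you supply the explicit argument via $\prod_j(1+(-1)^{c_j})$; this is a welcome bit of extra detail but not a different idea.
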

\begin{proof}
If $n$ is odd, this reflects an isomorphism of sets with an action of
$\Sym_n$: the subgroups $\Sym_{n-s}\times\Sym_s$ referred to in the
statement are precisely the stabilizers of representatives of the
$\Sym_n$-orbits in $N/2N$. However, this is not the case for $n$ even,
so we shall prove the equality on the level of characters. Since
$\Ind_{\Sym_{n-s}\times\Sym_s}^{\Sym_n}(1)(w)$ is the
number of $w$-stable subsets of $\{1,2,\cdots,n\}$ which have $s$ elements, this
amounts to proving that $\pi_{\Sym_n}^{(2)}(w)$ is the number of
$w$-stable subsets of $\{1,2,\cdots,n\}$ which have an even number of elements,
for any $w\in\Sym_n$.

Now the weight lattice $N$ referred to in Definition \ref{defpi}
may be identified with $\Z^n/\Z(1,1,\cdots,1)$, where $\Sym_n$ acts by
permuting the coordinates. Hence we may identify $N/2N$ with
$(\Z/2\Z)^n/(\Z/2\Z)(1,1,\cdots,1)$. For any $w\in\Sym_n$,
$\pi_{\Sym_n}^{(2)}(w)$ is, by definition, the number of $w$-fixed elements
of this set. Hence
$2\pi_{\Sym_n}^{(2)}(w)$ is the number of elements in the set
$$
\{(a_1,a_2,\cdots,a_n)\in(\Z/2\Z)^n\,|\,
\text{either }a_{w(i)}=a_i,\,\forall i,\text{ or }
a_{w(i)}=a_i+1,\,\forall i\}.
$$
Note that if $w$ has a cycle of odd length, the case
that $a_{w(i)}=a_i+1$ for all $i$ cannot occur. It is easy to deduce the following
formula, in which $c(w)$ denotes the number of cycles of $w$:
\be
\pi_{\Sym_n}^{(2)}(w)=\begin{cases}
2^{c(w)-1}&\text{ if $w$ has a cycle of odd length,}\\
2^{c(w)}&\text{ if all the cycles of $w$ have even length.}
\end{cases}
\ee
Clearly the right-hand side equals the number of
$w$-stable subsets of $\{1,2,\cdots,n\}$ which have an even number of elements,
as required.
\end{proof}

We shall now deduce Theorem \ref{typea} from Proposition \ref{piformula}
and Theorem \ref{main}. It is convenient to consider 
$\bigoplus_{n\geq 0}R(\Sym_n)$, which as usual
we regard as an $\N$-graded ring via the induction product:
\[
\chi_1.\chi_2=\Ind_{\Sym_{n_1}\times\Sym_{n_2}}^{\Sym_{n_1+n_2}}(\chi_1\boxtimes\chi_2),
\text{ for }\chi_1\in R(\Sym_{n_1}),\,\chi_2\in R(\Sym_{n_2}).
\]
(The identity element of this ring is the trivial character of $\Sym_0$, which we
will write simply as $1$.) In fact, to work with all $n$ simultaneously, we need the
completion $\widehat{\bigoplus}_{n\geq 0}R(\Sym_n)$. In this ring, every element with 
degree-$0$ term equal to $1$ has a multiplicative inverse.

The type A case of Theorem \ref{main} says that
\[
\varepsilon_{\Sym_n}\Lambda_{\Sym_n}=\sum_{m\geq 0}(-1)^{n-m}
\negthickspace\negthickspace
\sum_{\substack{n_1,n_2,\cdots,n_m\geq 1\\
n_1+n_2+\cdots+n_m=n}}
\negthickspace\negthickspace
\Ind_{\Sym_{n_1}\times
\cdots \times \Sym_{n_m}}^{\Sym_n}(\pi_{\Sym_{n_1}}^{(2)}\boxtimes\cdots\boxtimes
\pi_{\Sym_{n_m}}^{(2)}).
\]
This can be translated into the following equality in 
$\widehat{\bigoplus}_{n\geq 0}R(\Sym_n)$:
\be \label{firsteqn}
1+\sum_{n\geq 1}(-1)^n\,\varepsilon_{\Sym_n}\Lambda_{\Sym_n}=
\bigg(1+\displaystyle\sum_{n\geq 1}\pi_{\Sym_{n}}^{(2)}\bigg)^{-1}.
\ee
But Proposition \ref{piformula} amounts to the following equality in 
$\widehat{\bigoplus}_{n\geq 0}R(\Sym_n)$:
\be \label{secondeqn}
1+\sum_{n\geq 1}\pi_{\Sym_{n}}^{(2)}=\bigg(\sum_{n\geq 0}1_{\Sym_n}\bigg).
\bigg(\sum_{\substack{n\geq 0\\n\text{ even}}}1_{\Sym_n}\bigg).
\ee
We now substitute \eqref{secondeqn} into \eqref{firsteqn} and use the well-known
fact that the multiplicative inverse of $\sum_{n\geq 0}1_{\Sym_n}$ is
$\sum_{n\geq 0}(-1)^n\,\varepsilon_{\Sym_n}$, to obtain:
\be \label{thirdeqn}
1+\sum_{n\geq 1}(-1)^n\,\varepsilon_{\Sym_n}\Lambda_{\Sym_n}=
\bigg(\sum_{n\geq 0}(-1)^n\,\varepsilon_{\Sym_n}\bigg).
\bigg(1+\displaystyle\sum_{\substack{n\geq 2\\n\text{ even}}}1_{\Sym_n}\bigg)^{-1}.
\ee
Applying the ring involution of $\widehat{\bigoplus}_{n\geq 0}R(\Sym_n)$
which maps $\chi\in R(\Sym_n)$ to $(-1)^n\varepsilon_{\Sym_n}\chi\in R(\Sym_n)$ gives:
\be \label{fourtheqn}
1+\sum_{n\geq 1}\Lambda_{\Sym_n}=
\bigg(\sum_{n\geq 0}1_{\Sym_n}\bigg).
\bigg(1+\displaystyle\sum_{\substack{n\geq 2\\n\text{ even}}}
\varepsilon_{\Sym_n}\bigg)^{-1}.
\ee
Extracting the degree-$n$ term on both sides, we obtain
Theorem \ref{typea}.
%%%%%%%%%%%%%%%%%%%%%%%%%%%%%%%%%%%%%%%%%%%%%%%%%%%%%%%%%%%%%%%%%%%%%%%%
\section{Comparison of $\CT_W(\R)$ and $\CT_W$}
If $X$ is a complex variety with a real structure, the rational cohomology
of $X(\R)$ may have little to do with that of $X$. For example,
compare the De~Concini--Procesi models studied in \cite{yuzvinsky}
with their real loci studied in \cite{hendersonrains}.

However, the relationship becomes tighter if one considers not the individual
cohomology spaces $H^i(X(\R),\Q)$, but only the alternating sum. Indeed,
there is a large class of complex varieties $X$
for which the compact-supports Euler characteristic of the real locus 
$X(\R)$ can be obtained from the rational point counting function
$P_X(q)=|X(\bbF_q)|$, if $P_X(q)$ is a polynomial, by setting $q=-1$.
Such varieties $X$ are called computable in \cite[\S 5]{kisinlehrer}.
The condition that $P_X(q)$ be polynomial relates to the structure of 
the cohomology of $X$ (cf. \cite{kisinlehrer}). It applies to 
toric varieties as follows.

Write 
$Gr_{\bf F}^\ell Gr_{\bar{\bf F}}^mH^j_c(X,\C)$
for the $(\ell,m)$-graded part of the Hodge filtration of the cohomology
of the complex variety $X$. It is clear from the results of \cite[\S 2]{kisinlehrer}
that $X$ is computable if 
\be\label{computable} P_X(q)\in\Z[q], \text{ and }
\sigma \text{ acts as }(-1)^\ell\text{ on }Gr_{\bf F}^\ell Gr_{\bar{\bf F}}^\ell H^j_c(X,\C)
\ee
for each $j$ and $\ell$.

The results of \cite{F} and \cite[Proposition 5.2]{kisinlehrer}
make it evident that any toric variety $X$ satisfies
(\ref{computable}). This is because $X$ is a union of
locally closed subvarieties isomorphic to tori, which all satisfy
(\ref{computable}).

In the case of a nonsingular projective toric variety $X$, we have
$H^{2i+1}(X,\Q)=0$ and 
$H^{2i}(X,\Q)\otimes_\Q\C=Gr_{\bf F}^iGr_{\bar{\bf F}}^iH^{2i}_c(X,\C)$, so
$P_X(q)=\sum_i\dim H^{2i}(X,\Q)\,q^i$.
It follows that
\be \label{qtominusone}
\chi(X(\R))=\sum_{i}(-1)^i \dim H^{2i}(X,\Q).
\ee 

To obtain a degree-by-degree statement, one needs to consider
cohomology with coefficients in $\Z/2\Z$.
\begin{proposition} \label{comparison}
Let $X$ be a nonsingular projective complex toric variety.
There is an isomorphism 
\[ H^{2i}(X,\Z/2\Z)\cong H^i(X(\R),\Z/2\Z)\] 
for any $i$, which is equivariant for any automorphism of $X$ defined over $\R$.
\end{proposition}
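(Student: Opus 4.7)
My plan is to use the Białynicki--Birula cell decomposition of $X$ induced by a generic one-parameter subgroup $\lambda\colon\C^\times\to T$. Since $T$ and $\lambda$ can be chosen defined over $\R$, the decomposition is stable under complex conjugation: the complex cells $C_\alpha\cong\C^{d_\alpha}$ are indexed by the torus-fixed points (equivalently, by the maximal cones of the fan), and their real loci $C_\alpha(\R)\cong\R^{d_\alpha}$ give a CW decomposition of $X(\R)$ in bijection with the complex cells, with real dimensions matching complex dimensions.

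For $X$, since all complex cells have even real dimension, the mod-$2$ cellular differential vanishes trivially and $H^{2i}(X,\Z/2\Z)$ is free on the duals of the complex $i$-cells. The key technical step is to prove that the mod-$2$ cellular differential of the real CW decomposition of $X(\R)$ also vanishes, thereby realising $H^i(X(\R),\Z/2\Z)$ as the free $\Z/2\Z$-module on the duals of the real $i$-cells. The cellular differential counts incidence numbers between adjacent real cells, which for the Białynicki--Birula decomposition of a toric variety are governed by the combinatorics of the fan; a direct analysis, proceeding by induction on the skeleton filtration $F_k=\bigcup_{d_\alpha\le k}C_\alpha(\R)$ and using that each cell closure is a union of lower-dimensional cells arising from a subfan, shows these incidence numbers all vanish mod $2$.

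With this established, the cell-by-cell assignment $[\overline{C_\alpha}]\mapsto[\overline{C_\alpha(\R)}]$ provides the isomorphism. For naturality with respect to an arbitrary $\R$-automorphism of $X$, I would reformulate the result using the Jurkiewicz--Danilov presentation: $H^*(X,\Z/2\Z)$ is generated as a graded ring by the degree-$2$ classes $[D_\rho]$ of the torus-invariant prime divisors (one for each ray $\rho$ of the fan), modulo the Stanley--Reisner and linear relations read off from the fan. The same cellular argument yields the analogous presentation for $H^*(X(\R),\Z/2\Z)$ with $[D_\rho(\R)]$ in degree $1$, and the grade-halving ring map $[D_\rho]\mapsto[D_\rho(\R)]$ is then canonical, hence intertwines the action of any $\R$-automorphism of $X$ on the two cohomology rings. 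The principal obstacle is the vanishing of the mod-$2$ cellular differential on $X(\R)$; once this is in place the remaining steps are essentially formal.
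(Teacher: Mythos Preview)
The paper's own proof is a bare citation: it invokes Krasnov's theorem on real algebraically maximal varieties and points to Bihan--Franz--McCrory--van~Hamel for a survey of alternative arguments. Your proposal is thus a genuinely different route, essentially a self-contained version of the Davis--Januszkiewicz/Jurkiewicz argument that the latter reference discusses. The cellular set-up is fine, and the ``principal obstacle'' you flag---vanishing of the mod-$2$ differential on $X(\R)$---is real; the clean way to dispatch it is not a direct incidence-number calculation but the Smith--Thom inequality $\sum_i\dim_{\Z/2\Z} H^i(X(\R),\Z/2\Z)\le\sum_i\dim_{\Z/2\Z} H^i(X,\Z/2\Z)$, which together with the fact that both sides are bounded above by the common number of Bia\l ynicki--Birula cells forces equality and hence the vanishing of the real cellular differential.

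There is, however, a genuine gap in your equivariance argument. The assignment $[D_\rho]\mapsto[D_\rho(\R)]$ and the Jurkiewicz--Danilov presentation depend on the choice of torus; an arbitrary $\R$-automorphism of $X$ (think of $\mathrm{PGL}_{n+1}$ acting on $\bbP^n$) need not normalise $T$ nor permute the divisors $D_\rho$, so ``canonical'' does not by itself give naturality for such automorphisms. The fix is to describe your grade-halving map intrinsically: it is the composite of the cycle-class isomorphism $H^{2i}(X,\Z/2\Z)\cong CH^i(X)\otimes\Z/2\Z$ (valid for smooth projective toric varieties, and natural because both sides and the cycle map are defined without reference to the torus) with the Borel--Haefliger real cycle class map $CH^i(X)\otimes\Z/2\Z\to H^i(X(\R),\Z/2\Z)$, which is functorial for all morphisms of smooth $\R$-varieties. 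On the generators this recovers $[D_\rho]\mapsto[D_\rho(\R)]$, but now equivariance is automatic---and this intrinsic description is essentially why the paper is content to cite Krasnov.
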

\begin{proof}
This follows from \cite[Theorem 0.1]{krasnov}, for instance.
See \cite[\S4]{bihanetal} for a summary of the various alternative proofs.
\end{proof}

We deduce an equivariant version of \eqref{qtominusone}.
\begin{corollary} \label{oddorder}
Let $X$ be a nonsingular projective complex toric variety, and
$\alpha$ an automorphism of $X$ of finite odd order, defined over $\R$. Then
\[
\sum_i (-1)^i\, \tr(\alpha,H^i(X(\R),\Q))=\sum_i (-1)^i\, \tr(\alpha,H^{2i}(X,\Q)).
\]
\end{corollary}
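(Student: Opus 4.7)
The plan is to show that both sides of the identity equal the integer $\chi(X(\R)^\alpha)$, via the topological Lefschetz--Hopf fixed-point theorem on two different compact smooth manifolds ($X(\R)$ and $X(\C)$) together with the computable condition \eqref{computable} for toric $X$. Write $\sigma$ for complex conjugation on $X$; by hypothesis $\alpha\sigma = \sigma\alpha$. Lefschetz--Hopf for $\alpha$ on $X(\R)$ immediately gives that the left-hand side equals $\chi(X(\R)^\alpha)$.

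The crucial step for the right-hand side, and the unique place where the oddness of the order $n$ of $\alpha$ is used, is to identify $X(\C)^{\alpha\sigma}$ with $X(\R)^\alpha$. Since $\alpha,\sigma$ commute and $\sigma^2 = 1$, we have $(\alpha\sigma)^n = \alpha^n\sigma^n = \sigma$, using $n$ odd. Thus any fixed point $x$ of $\alpha\sigma$ satisfies $\sigma(x) = x$, i.e.\ $x \in X(\R)$, and then $\alpha\sigma(x) = \alpha(x) = x$ gives $x \in X(\R)^\alpha$; the converse inclusion is trivial. (For even-order $\alpha$ the fixed locus of $\alpha\sigma$ is instead a ``twisted real form'' of $X$, and the argument collapses.) Lefschetz--Hopf for $\alpha\sigma$ on $X(\C)$, combined with the vanishing of $H^{\mathrm{odd}}(X,\Q)$ for toric $X$, then gives
\[
\chi(X(\R)^\alpha) = \chi(X(\C)^{\alpha\sigma}) = \sum_j (-1)^j \tr(\alpha\sigma, H^j(X,\Q)) = \sum_i \tr(\alpha\sigma, H^{2i}(X,\Q)).
\]

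Finally, the computable condition \eqref{computable} (established in the paper for any toric variety) implies that $\sigma$ acts as the scalar $(-1)^i$ on $H^{2i}(X, \Q)$, whence $\tr(\alpha\sigma, H^{2i}(X,\Q)) = (-1)^i \tr(\alpha, H^{2i}(X,\Q))$; substituting into the previous display identifies $\chi(X(\R)^\alpha)$ with the right-hand side and completes the argument. The main obstacle is the fixed-point identification $X(\C)^{\alpha\sigma} = X(\R)^\alpha$: it is elementary but precisely where the odd-order hypothesis on $\alpha$ is essential.
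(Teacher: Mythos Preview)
Your argument is correct but proceeds along a genuinely different route from the paper. The paper deduces the corollary from Proposition~\ref{comparison} (Krasnov's equivariant isomorphism $H^{2i}(X,\Z/2\Z)\cong H^i(X(\R),\Z/2\Z)$): since $A=\langle\alpha\rangle$ has odd order, reduction mod~$2$ gives an isomorphism $R(\Q A)\to R((\Z/2\Z)A)$, and the universal coefficient theorem (plus torsion-freeness of $H_\bullet(X)$) converts the $\Z/2\Z$ statement into the desired identity over~$\Q$. Your approach bypasses Proposition~\ref{comparison} entirely: you apply the Lefschetz trace formula to the finite-order diffeomorphism $\alpha\sigma$ of the compact manifold $X(\C)$, use the elementary observation $(\alpha\sigma)^n=\sigma$ (with $n$ odd) to identify $X(\C)^{\alpha\sigma}=X(\R)^\alpha$, and then invoke \eqref{computable} together with the purity $H^{2i}(X,\C)=\Gr_{\bf F}^i\Gr_{\bar{\bf F}}^i H^{2i}_c(X,\C)$ (noted just above \eqref{qtominusone}) to get that $\sigma$ acts as $(-1)^i$ on $H^{2i}(X,\Q)$. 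The trade-off: the paper's route exhibits the identity as a shadow of an honest degree-by-degree isomorphism over $\Z/2\Z$, whereas yours is more self-contained and shows that the weaker hypothesis \eqref{computable} already suffices, without appeal to Krasnov's theorem. One terminological quibble: the form of Lefschetz you need, namely $L(f)=\chi(M^f)$ for a finite-order diffeomorphism $f$ of a compact manifold, is standard but is not quite the Lefschetz--Hopf theorem (which treats isolated fixed points); it follows, for instance, from the existence of an equivariant triangulation.
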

\begin{proof}
Let $A$ be the cyclic group of odd order generated by $\alpha$. It suffices
to prove the following equation in the Grothendieck ring $R(A)=R(\Q A)$:
\be \label{groth}
\sum_i (-1)^i\, H^i(X(\R),\Q)=\sum_i (-1)^i\, H^{2i}(X,\Q).
\ee
Since $|A|$ is odd, there is an isomorphism $\tau:R(A)\to R((\Z/2\Z)A)$,
where $R((\Z/2\Z)A)$ denotes the Grothendieck ring of $(\Z/2\Z)A$-modules:
for any $\Q A$-module $M$, $\tau(M)$ is defined by choosing an integral form of $M$
and reducing modulo $2$. So it suffices to prove the equation in
$R((\Z/2\Z)A)$ obtained from \eqref{groth} by applying $\tau$ to both sides. 

Now the universal coefficient theorem for cohomology implies the following equations
in $R((\Z/2\Z)A)$. (Here $S[2]$ denotes the $2$-primary component of
a $\Z$-module $S$.)
\[
\begin{split}
\tau(H^i(X(\R),\Q))&=\Hom_\Z(H_i(X(\R)),\Z)\otimes \Z/2\Z,\text{ and}\\
H^i(X(\R),\Z/2\Z)&=\Hom_\Z(H_i(X(\R)),\Z/2\Z)+\Ext_\Z(H_{i-1}(X(\R)),\Z/2\Z)\\
&=\Hom_\Z(H_i(X(\R)),\Z)\otimes \Z/2\Z
\;+\;
\Hom_\Z(H_i(X(\R))[2],\Z/2\Z)\\
&\qquad\qquad +\;
\Hom_\Z(H_{i-1}(X(\R))[2],\Z/2\Z).
\end{split}
\]
So applying $\tau$ to the left-hand side of \eqref{groth} gives
\be
\tau\bigg(\sum_i (-1)^i H^i(X(\R),\Q)\bigg)=\sum_i (-1)^i H^i(X(\R),\Z/2\Z).
\ee
On the right-hand side, the fact that $H_{2i+1}(X)$
vanishes and $H_{2i}(X)$ is torsion-free \cite[\S 5.2]{F} 
implies that $\tau(H^{2i}(X,\Q))=H^{2i}(X,\Z/2\Z)$.
The result now follows from Proposition \ref{comparison}.
\end{proof}
Examples abound to show that the restriction to odd-order elements is necessary
(consider $X=\bbP^1$, $\alpha:z\mapsto z^{-1}$).
Thus, for a group of even order acting on a toric variety, 
the equivariant Euler characteristic of the real locus cannot be simply
deduced from knowledge of the action on the cohomology groups of the complex variety. 

In the case where $X=\CT_W$ with $\alpha=w$ an element of $W$,
the left-hand side of Corollary \ref{oddorder} is computed by
Theorem \ref{main}, and a formula for the right-hand side may
be deduced from \cite[Theorem 1.1]{L}. The comparison gives nothing
new, and one would not expect it to, because the proofs of both theorems
use the same reduction to the case of a torus, and the analogue of
Corollary \ref{oddorder} for a torus $T$ is easy to prove. Namely, 
for any automorphism $\alpha$ of $T$ of finite odd order, one has
\be
\sum_i (-1)^i\, \tr(\alpha,H^i_c(T(\R),\Q))=
(-1)^{\dim T}\sum_i \tr(\alpha,H^{i}_c(T,\Q)),
\ee
because both sides equal $(-1)^{\dim T}\, 2^{\rank_\Z Y(T)^\alpha}$.

However, we wish to point out a curious complement to Corollary \ref{oddorder},
which appears to hold only in type A (except for odd-rank cases, where it
follows from Poincar\'e duality). For example, it fails for a Coxeter element in
type B$_2$, by \cite[Proposition 3.7(ii)]{L}.
\begin{proposition}
If $w\in\Sym_n$ has even order, then
\[ \sum_i (-1)^i\, \tr(w,H^{2i}(\CT_{\Sym_n},\Q))=0. \]
\end{proposition}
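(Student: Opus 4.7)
The plan is to compute $\sum_i q^i\,\tr(w,H^{2i}(\CT_{\Sym_n},\Q))$ by the Grothendieck--Lefschetz trace formula and then specialize at $q=-1$. By \cite[Theorem 1.1]{L}, this equivariant Poincar\'e polynomial equals $|\CT_{\Sym_n}(\Fq)^{wF}|$, and the torus-orbit decomposition $\CT_W=\coprod_{xW_J}T(xW_J)$ used in the proof of Proposition~\ref{sumind} yields
\[
\sum_i q^i\,\tr(w,H^{2i}(\CT_W,\Q))=\sum_{J\subseteq\Pi}\Ind_{W_J}^{W}(Z_J(q))(w),
\]
where $Z_J(q)(v):=|T(W_J)(\Fq)^{vF}|\in\Z[q]$ is a class function on $W_J$. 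This is the exact $q$-analog of the argument in Proposition~\ref{sumind}, with the real $\chi_c$ replaced by the polynomial point-counting function.

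In type $A_{n-1}$, if $W_J=\Sym_{n_1}\times\cdots\times\Sym_{n_k}$ then the cocharacter lattice $M_J$ decomposes as a direct sum of weight lattices of the factors, so $T(W_J)$ is the direct product of the tori of $\mathrm{PGL}_{n_i}$. The exact sequence $1\to\Fq^\times\to T_{\GL_{n_i}}^{v_iF}\to T_{\mathrm{PGL}_{n_i}}^{v_iF}\to 1$ furnished by Lang's theorem, together with the standard count $|T_{\GL_m}(\Fq)^{uF}|=\prod_j(q^{\mu_j}-1)$ for $u$ of cycle type $\mu$, gives
\[
Z_J(v;q)=\prod_{i=1}^{k}\,\prod_{j}\frac{q^{\mu^{(i)}_j}-1}{q-1}
\]
for $v=(v_1,\ldots,v_k)$ with $v_i\in\Sym_{n_i}$ of cycle type $\mu^{(i)}$. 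Since each factor $(q^\mu-1)/(q-1)=1+q+\cdots+q^{\mu-1}$ evaluates at $q=-1$ to $0$ if $\mu$ is even and to $1$ if $\mu$ is odd, it follows that $Z_J(v;-1)=0$ as soon as $v$ has any cycle of even length.

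If $w\in\Sym_n$ has even order, then every conjugate $x\inv wx$ has the same cycle type as $w$ and so also has a cycle of even length, whence $Z_J(x\inv wx;-1)=0$ for every $x$ and every $J$. By the Frobenius formula $\Ind_{W_J}^{W}(f)(w)=|W_J|\inv\sum_{x:\,x\inv wx\in W_J}f(x\inv wx)$, each induced character $\Ind_{W_J}^{W}(Z_J(-1))$ vanishes at $w$, and summing over $J$ yields the proposition. The main obstacle is the careful reduction to the torus level; once that is in place, the conclusion is the parity observation above. This also clarifies why the statement is specific to type $A$: in other types $T(W_J)$ does not split as a product of tori of $\mathrm{PGL}_m$'s, so $Z_J(-1)$ can be nonzero on even-order elements of $W_J$, consistent with the $B_2$ counterexample from \cite[Proposition 3.7(ii)]{L}.
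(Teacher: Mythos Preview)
Your approach is sound and genuinely different from the paper's, but the displayed formula for $Z_J$ has a slip: the exact sequence gives $|T_{\mathrm{PGL}_{n_i}}^{v_iF}|=(q-1)^{-1}\prod_j(q^{\mu^{(i)}_j}-1)$, so one divides by $q-1$ once per block $i$, not once per cycle $j$. The correct expression is
\[
Z_J(v;q)=\prod_{i=1}^{k}\frac{1}{q-1}\prod_{j}\bigl(q^{\mu^{(i)}_j}-1\bigr).
\]
This does not affect your conclusion: at $q=-1$ the denominator is $-2\neq 0$, and any even cycle length $\mu^{(i)}_j$ contributes a factor $(-1)^{\mu^{(i)}_j}-1=0$ to the numerator, so $Z_J(v;-1)=0$ whenever $v$ has an even cycle, exactly as you need.

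The paper proceeds quite differently. It specializes Stembridge's generating-function identity \eqref{stembridge} at $q=-1$ to rewrite $\sum_i(-1)^iH^{2i}(\CT_{\Sym_n},\Q)$ as the signed sum \eqref{othertypea} of induced trivial characters over tuples of even parts, and then proves vanishing at even-order $w$ by a combinatorial recursion on $m$-tuples of disjoint $w$-stable even subsets. Your argument bypasses both the generating-function manipulation and the bijective combinatorics, reducing everything to the single parity observation $1+(-1)+\cdots+(-1)^{\mu-1}=0$ for $\mu$ even at the torus level; it also makes transparent why the result is special to type $A$, as you note. What the paper's route buys in exchange is the explicit formula \eqref{othertypea}, which allows a direct term-by-term comparison with Theorem~\ref{typea} and hence a visible verification of Corollary~\ref{oddorder} in this case.
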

\begin{proof}
Recall the ring $\widehat{\bigoplus}_{n\geq 0}R(\Sym_n)$ from Section 3.
The following generating-function formula holds in 
$\Q[q]\otimes_\Q\widehat{\bigoplus}_{n\geq 0}R(\Sym_n)$ 
(see \cite[Theorem 6.2]{stembridge}):
\be \label{stembridge}
1+\sum_{n\geq 1}\sum_i q^i\,H^{2i}(\CT_{\Sym_n},\Q)
=\frac{\sum_{n\geq 0} 1_{\Sym_n}}{1-\sum_{n\geq 2}(q+q^2+\cdots+q^{n-1})1_{\Sym_n}}.
\ee
Equation \eqref{stembridge} can be deduced easily from \cite[Theorem 1.1]{L},
which implies that the left-hand side is the multiplicative inverse of 
$1-\sum_{n\geq 1}\gamma_{\Sym_n}$, where $\gamma_{\Sym_n}(w)=\det_{V}(q-w)$.
Setting $q=-1$ in \eqref{stembridge}, we deduce the following equation
in $\widehat{\bigoplus}_{n\geq 0}R(\Sym_n)$:
\be
1+\sum_{n\geq 1}\sum_i (-1)^i\,H^{2i}(\CT_{\Sym_n},\Q)
=\bigg(\sum_{n\geq 0} 1_{\Sym_n}\bigg).
\bigg(1+\sum_{\substack{n\geq 2\\n\text{ even}}}1_{\Sym_n}\bigg)^{-1}.
\ee
Extracting the degree-$n$ terms, we have an equality in $R(\Sym_n)$:
\be \label{othertypea}
\begin{split}
\sum_i (-1)^i\,&H^{2i}(\CT_{\Sym_n},\Q)\\
&=\sum_{m\geq 0}(-1)^m
\negthickspace\negthickspace
\sum_{\substack{n_1,n_2,\cdots,n_m\geq 2\\
n_1,n_2,\cdots,n_m\textup{ even}\\n_1+n_2+\cdots+n_m\leq n}}
\negthickspace\negthickspace
\Ind_{\Sym_{n-n_1-\cdots-n_m}\times \Sym_{n_1}\times
\cdots \times \Sym_{n_m}}^{\Sym_n}(1).
\end{split}
\ee 
Note that the right-hand side
of \eqref{othertypea} coincides with the right-hand side of Theorem
\ref{typea} when evaluated at any $w$ satisfying $\varepsilon(w)=1$
(in particular, since this holds for all elements of odd order,
Corollary \ref{oddorder} is visibly true.) 

To conclude the proof,
we must show that the right-hand side of \eqref{othertypea} takes the
value $0$ on any $w$ which has even order (i.e.\ contains an even cycle).
But the value in question is $\sum_{m\geq 0}(-1)^m f_m(w)$,
where $f_m(w)$ is the number of $m$-tuples
$(A_1,A_2,\cdots,A_m)$ of disjoint $w$-stable nonempty subsets of $\{1,\cdots,n\}$
such that each $|A_i|$ is even. Let $C\subseteq\{1,2,\cdots,n\}$ be a cycle of
$w$ such that $|C|$ is even, and let $y$ be the induced permutation of
$\{1,2,\cdots,n\}\setminus C$. Since any such $(A_1,A_2,\cdots,A_m)$ must have
either $C=A_i$ for some $i$, $C\subset A_i$ for some $i$, or $C\cap A_i=\emptyset$
for all $i$, we have
\be
f_m(w)=mf_{m-1}(y)+(m+1)f_m(y),
\ee
where $f_{-1}(y)=0$. It follows immediately that
$\sum_{m\geq 0}(-1)^m f_m(w)=0$.
\end{proof}
%%%%%%%%%%%%%%%%%%%%%%%%%%%%%%%%%%%%%%%%%%%%%%%%%%%%%%%%%%%%%%%%%%%%%%%%%

\end{document}